\documentclass[]{llncs}

\usepackage{amsmath,amssymb}
\usepackage{comment}
\usepackage{bbold}

\newcommand{\Z}{\mathbb{Z}}
\newcommand{\N}{\mathbb{N}}
\newcommand{\lcm}{\mathrm{lcm}}
\newcommand{\F}{\mathcal{F}}
\newcommand{\Con}{\mathrm{Con}}
\newcommand{\Sub}{\mathrm{Sub}}

\begin{document}

\title{On Shift Spaces with Algebraic Structure\thanks{Research supported by the Academy of Finland Grant 131558}}

\author{Ville Salo \and Ilkka T\"orm\"a}

\maketitle

\begin{abstract}
We investigate subshifts with a general algebraic structure and cellular automata on them, with an emphasis on (order-theoretic) lattices. Our main results concern the characterization of Boolean algebraic subshifts, conditions for algebraic subshifts to be recoded into cellwise algebras and the limit dynamics of homomorphic cellular automata on lattice subshifts.
\end{abstract}


\section{Introduction}

There has been considerable interest in the connections between groups and symbolic dynamics in the literature. Linear cellular automata have been investigated extensively \cite{ItOsNa83} \cite{CaFoMaMa00} \cite{MaMa98} \cite{Sa97} \cite{Ka00}, and some authors \cite{Sc95} \cite{Ki87} \cite{BoSc08} have looked into subshifts with a group structure. Since the study of linear automata and group subshifts has proven fruitful, it seems natural to consider whether other algebraic structures produce interesting behavior in subshifts and CA. This paper is, as far as we are aware, the first foray in this direction, with an emphasis on order-theoretic lattices.

In the first section, we look at the case of giving a subshift the structure of an order-theoretic lattice, with the operations defined cellwise. We characterize all such subshifts in terms of the local rules that define them. In particular, we find that a result of Kitchens \cite{Ki87} for group subshifts also holds for Boolean algebras, namely that they are, up to conjugacy, products of a full shift and a finite subshift. In our case, the conjugacy can even be made algebraic.

In the second section, we find necessary and sufficient conditions for when an algebra operation can be recoded to one defined cellwise. The conditions are stated in terms of the affine maps of the algebra, that is, functions built from the algebra operations. We prove that the recoding can be done when all the maps are `local', in the sense that their radius is bounded as cellular automata. Many natural classes of algebras, in particular groups and Boolean algebras, always satisfy this condition.

In the third section, we consider cellular automata that are also algebra homomorphisms, in the case that the algebra has the so-called congruence-product property (lattices exhibit this property). We prove that all such CA are stable, and the limit set is conjugate to a product of full shifts and finite shifts.

\section{Definitions}

If $S$ is a finite set, the \emph{alphabet}, the space $S^\Z$ is called the \emph{full shift over $S$}. If $x \in S^\Z$, we denote the $i$th coordinate of $x$ with $x_i$, and abbreviate $x_i x_{i+1} \cdots x_{i+n-1}$ by $x_{[i,i+n-1]}$. For a word $w \in S^n$, we say that $w$ \emph{occurs in $x$}, if there exists $i$ such that $w = x_{[i,i+n-1]}$. On $S^\Z$ we define the \emph{shift map} $\sigma_S$ (or simply $\sigma$, if $S$ is clear from the context) by $\sigma_S(x)_i = x_{i+1}$ for all $i$.

We assume a basic acquaintance with symbolic dynamics, in particular the notions of subshift, forbidden word, SFT, sofic shift, mixingness, block map and cellular automaton. A clear exposition can be found in \cite{LiMa95}. The set of words of length $n$ occurring in a subshift $X$ is denoted by $B_n(X)$.

Let $T$ be a set of pairs $(f,n)$, where $f$ is a symbol and $n \in \N$, with the property that $(f,m), (f,n) \in T \Rightarrow m = n$. Here, $n$ is called the \emph{arity} of $f$. An \emph{algebra of type $T$} is a pair $(S,F)$, where $S$ is a set and for each $(f,n) \in T$ we have a function $f' : S^n \to S$ in the set $F$. In this case, we identify $f'$ with $f$, and the functions $f$ are called \emph{algebra operations}. We usually identify $(S,F)$ with $S$, if $F$ is clear from the context.

A \emph{variety of type $T$ defined by the set of identities $I$} is the class of algebras of type $T$ that satisfy the identities in $I$. We do not define identities rigorously, but are satisfied with an intuitive presentation (see the examples below). If $\F$ is a variety, $(S,F) \in \F$ and $R \subset S$ is closed under the operations of $S$, then we call $(R,F)$ a \emph{subalgebra} of $S$. The set of subalgebras of $S$ is denoted $\Sub(S)$. A function $g : S \to R$ between two algebras in $\F$ is called a \emph{homomorphism} or an \emph{$\F$-morphism} if $g(f(s_1,\ldots,s_n)) = f(g(s_1),\ldots,g(s_n))$ for each $n$-ary operation $f$. If $g$ is bijective, it is called an \emph{isomorphism}. The \emph{direct product} of an indexed family $(S_i)_{i \in \mathcal{I}}$ of algebras is the algebra $\prod_{i \in \mathcal{I}} S_i$, where the operations are defined cellwise ($f(s^1, \ldots, s^n)_i = f(s^1_i, \ldots, s^n_i)$). An algebra is \emph{directly indecomposable}, if it is not isomorphic to a product of two nontrivial algebras. All finite algebras are isomorphic to a finite product of directly indecomposable finite algebras. All varieties are closed under subalgebras, homomorphic images and products \cite{BuSa81}.

If $\sim \; \subset S \times S$ is an equivalence relation that satisfies
\[ s_1 \sim t_1, \ldots, s_n \sim t_n \Longrightarrow f(s_1,\ldots,s_n) \sim f(t_1,\ldots,t_n) \]
for all $s_i,t_i \in S$ and all $n$-ary operations $f$, we say that $\sim$ is a \emph{congruence} on $S$. The set of congruences on an algebra $S$ is denoted $\Con(S)$. A natural algebraic structure is induced by the algebra operations on the set of equivalence classes $S/\!\sim$, the kernel $\ker(g) = \{(s,t) \in S \times S \;|\; g(s) = g(t)\}$ of a homomorphism $g$ is always a congruence, and $S/\ker(g)$ is isomorphic to $g(S)$ \cite{BuSa81} (the last claim is known as the \emph{Homomorphism Theorem}).

The variety of \emph{lattices} has type $\{(\wedge,2),(\vee,2)\}$ and is defined by the identities
\[
\begin{array}{rl}
x \wedge x \approx x, & x \wedge y \approx y \wedge x \enspace , \\
(x \wedge y) \wedge z \approx x \wedge (y \wedge z), & x \wedge (x \vee y) \approx x \enspace ,
\end{array}
\]
and the same identities with $\wedge$ and $\vee$ interchanged (their \emph{dual versions}). The operations $\wedge$ and $\vee$ are called \emph{meet} and \emph{join}, respectively. It is known that the variety of lattices coincides with the class of partially ordered sets where all pairs of elements have suprema and infima, where the correspondence is given by $x \wedge y = \inf \{x,y\}$ and $x \vee y = \sup \{x,y\}$. If $S$ is a lattice and $a,b \in S$, then we denote $[a,b] = \{c \in S \;|\; a \leq c \leq b\}$, where $\leq$ is the partial order of $S$. A lattice $S$ is \emph{modular} if it satisfies the additional constraint that whenever $a,b,c \in S$ and $a \leq b$, then $a \vee (b \wedge c) = b \wedge (a \vee c)$. The variety of \emph{distributive lattices} satisfies the lattice identities and the additional identity
\[ x \vee (y \wedge z) = (x \vee y) \wedge (x \vee z) \]
and its dual version. All distributive lattices are modular \cite{BuSa81}. Of particular interest is the binary lattice $\mathbb{2}$ containing the elements $\{0, 1\}$ with their usual numerical order.

The variety of \emph{Boolean algebras} has type $\{(\wedge,2),(\vee,2),(\, \bar{\cdot},1),(1,0),(0,0)\}$. A Boolean algebra is a distributive lattice w.r.t. $\wedge$ and $\vee$, and also satisfies
\[
\begin{array}{rl}
x \wedge 0 \approx 0, & x \vee 1 \approx 1 \enspace , \\
x \wedge \bar{x} \approx 0, & x \vee \bar{x} \approx 1 \enspace .
\end{array}
\]
It is known that every finite Boolean algebra is isomorphic to the algebra of subsets $2^T$ of some set $T$ where the ordering is given by set inclusion \cite{BuSa81}.

Let $\F$ be a variety of algebras. We call $X \subset S^\Z$ an \emph{$\F$-subshift}, if it is a subshift and has an algebra structure in $\F$ whose operations are block maps. In particular, every finite $S \in \F$ induces a natural cellwise algebra structure on $S^\Z$ by $f(x^1,\ldots,x^n)_i = f(x^1_i,\ldots,x^n_i)$ for all $i$ (that is, $S^\Z$ is taken to be a direct product), and in this case, if $X \subset S^\Z$ is a subshift in $\Sub(S^\Z)$, it is called a \emph{cellwise $\F$-subshift}. A conjucacy that is also an $\F$-morphism is called \emph{algebraic}. A cellular automaton with state set $S$ that is also an $\F$-morphism is called \emph{$\F$-linear}.

\section{Cellwise Lattice Subshifts}

In this section, let $S$ be a finite lattice ordered by $\leq$.

We begin by giving a characterization for the cellwise lattice subshifts.

\begin{definition}
Let $X \subset S^\Z$ be a subshift and $a \in S$. For such $X$ we define $m^a$ and $M^a$ by
\[ \forall i \in \Z: m^a_i = \bigwedge_{\substack{x \in X \\ x_0 \geq a}} x_i,\; M^a_i = \bigvee_{\substack{x \in X \\ x_0 \leq a}} x_i \enspace . \]
\end{definition}

It is easy to see that $a \leq b$ implies $m^a \leq m^b$ and $M^a \leq M^b$.

\begin{lemma}
\label{lemma:FirstCharacterization}
Let $X \subset S^\Z$ be a subshift with $B_1(X) = S$. The following are equivalent:
\begin{itemize}
\item $X$ is a cellwise lattice subshift.
\item For all $x \in S^\Z$, we have $x \in X$ iff $x \geq \sigma^{-i}(m^{x_i})$ holds for all $i$.
\item For all $x \in S^\Z$, we have $x \in X$ iff $x \leq \sigma^{-i}(M^{x_i})$ holds for all $i$.
\end{itemize}
\end{lemma}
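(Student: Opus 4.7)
My plan is to prove $(1) \Leftrightarrow (2)$; the equivalence $(1) \Leftrightarrow (3)$ then follows by passing to the dual lattice (swapping $\wedge \leftrightarrow \vee$, $\leq \leftrightarrow \geq$, and $m \leftrightarrow M$).

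For $(1) \Rightarrow (2)$, I first argue that $m^a \in X$ for every $a \in S$. On any finite window the restriction of $m^a$ is a cellwise meet of finitely many patterns realised in $X$ by points with $x_0 \geq a$, and cellwise meet-closure realises this meet in $X$; compactness of $X$ lifts this to $m^a \in X$. Given this, the ``only if'' direction is immediate: if $x \in X$, then $\sigma^i(x) \in X$ has $0$th coordinate $x_i$, so $m^{x_i} \leq \sigma^i(x)$, giving $\sigma^{-i}(m^{x_i}) \leq x$. For the converse direction, the assumption $B_1(X) = S$ yields some $w \in X$ with $w_0 = x_i$, hence $m^{x_i}_0 = x_i$ and $\sigma^{-i}(m^{x_i})_i = x_i$; the hypothesis therefore upgrades to the equality $x = \bigvee_i \sigma^{-i}(m^{x_i})$. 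A second compactness argument, using cellwise join-closure of $X$ for finite joins on each window, places this (infinite) join in $X$.

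For $(2) \Rightarrow (1)$, write $Y = \{x \in S^\Z : x \geq \sigma^{-i}(m^{x_i}) \text{ for all } i\}$, so that condition $(2)$ asserts $X = Y$. Closure of $X$ under cellwise meet is immediate from monotonicity of $a \mapsto m^a$: for $z := x \wedge y$, the inequality $z_i \leq x_i$ forces $m^{z_i} \leq m^{x_i}$, so $\sigma^{-i}(m^{z_i}) \leq x$ and symmetrically $\leq y$, hence $\leq z$, and $(2)$ gives $z \in X$. A direct verification of $(2)$ for the point $m^a$ itself (using that $w \in X$ with $w_0 \geq a$ has $w_i \geq m^a_i$, so that $\sigma^i(w)$ witnesses $m^{m^a_i}_{j-i} \leq w_j$) further shows $m^a \in X$ for every $a$.

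The main obstacle is closure under cellwise join. For $z := x \vee y$ the condition $z \in Y$ unfolds to $m^{x_i \vee y_i}_{j-i} \leq x_j \vee y_j$ for all $i, j$, whereas $x, y \in Y$ only deliver $x_j \vee y_j \geq m^{x_i}_{j-i} \vee m^{y_i}_{j-i}$; since the automatic bound $m^{a \vee b} \geq m^a \vee m^b$ goes the wrong way, the gap must be bridged by proving the identity $m^{a \vee b} = m^a \vee m^b$ cellwise. I would attempt this by showing $m^a \vee m^b \in X$: condition $(2)$ applied to the point $m^a \vee m^b$ reduces, via monotonicity, to the requirement that for each pair $(i, k)$ there exists $u \in X$ with $u_0 \geq m^a_i \vee m^b_i$ and $u_k \leq m^a_{i+k} \vee m^b_{i+k}$. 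Producing such a witness without already assuming $m^a \vee m^b \in X$ is the delicate core of the argument, and is where I expect the finiteness of $S$, together with a compactness/saturation argument over the finite-window meet structure of $X$, to play the decisive role.
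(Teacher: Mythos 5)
The parts you actually prove are correct and essentially coincide with the paper's argument: the paper likewise obtains $m^a \in X$ as a limit of finite cellwise meets, gets the ``only if'' half of (2) from the definition of $m^{x_i}$, recovers $x$ as the limit of the finite joins $\bigvee_{|i|\le n}\sigma^{-i}(m^{x_i})$ using $B_1(X)=S$, and derives meet-closure from (2) via monotonicity of $a\mapsto m^a$ (stated there in contrapositive form). Your extra observation that (2) alone already forces $m^a\in X$ is also correct.

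The gap you isolate --- deducing join-closure from (2) --- is genuine, and in fact it cannot be bridged: the implication $(2)\Rightarrow(1)$ is false for general $S$. Take $S=\{0,a,b,1\}$, the four-element Boolean lattice with $a,b$ incomparable atoms, and let $X$ be the SFT forbidding $10$, $1a$, $1b$ (every $1$ must be followed by a $1$). Then $B_1(X)=S$ and $X$ is closed under cellwise meet but not under cellwise join, since ${}^\infty 0a0^\infty\vee{}^\infty 0b0^\infty={}^\infty 010^\infty\notin X$. Yet condition (2) holds: one computes $m^0={}^\infty 0^\infty$, that $m^a$ and $m^b$ equal $a$ resp.\ $b$ at the origin and $0$ elsewhere, and that $m^1$ is $1$ on $[0,\infty)$ and $0$ elsewhere, so the inequalities $x\ge\sigma^{-i}(m^{x_i})$ say precisely that every $1$ in $x$ is followed by $1$'s, i.e.\ that $x\in X$. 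Here $m^{a\vee b}=m^1\ne m^a\vee m^b$, so the identity you hoped to establish fails. Note that the paper's own proof commits exactly the step you declined to take: it infers $x\vee y\ge\sigma^{-i}(m^{a_i})$ from $x\vee y\ge\sigma^{-i}(m^{x_i})$ and $x\vee y\ge\sigma^{-i}(m^{y_i})$, which amounts to the wrong-direction inequality $m^{a\vee b}\le m^a\vee m^b$. What your arguments, together with their duals, do establish is the corrected statement that (1) is equivalent to the conjunction of (2) and (3); moreover $(2)\Leftrightarrow(1)$ does hold when $S$ is a chain, since then $m^{a\vee b}=m^{\max(a,b)}=m^a\vee m^b$, which covers the case $S=\mathbb{2}$ on which the paper's later uses of this direction of the lemma rely.
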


\begin{proof}
Assume first that $X$ is a cellwise lattice subshift. Let $a \in S$, and for all $i \in \Z$, let $x^i \in X$ such that $x^i_i = m^a_i$. Such $x^i$ exist, since $X \in \Sub(S^\Z)$ and has alphabet $S$. Then the sequence $\bigwedge_{i=-n}^n x^i$ approaches $m^a$ as $n$ grows, so $m^a \in X$. The second condition is then necessary by the definition of the $m^{x_i}$, and it is sufficient since the sequence $\bigvee_{i=-n}^n \sigma^{-i}(m^{x_i})$ approaches $x$ as $n$ grows.

Assume then that $X \notin \Sub(S^\Z)$. Suppose first that there exist $x,y \in X$ with $x \wedge y \notin X$, and let $a_i = x_i \wedge y_i$. Now, for all $i$, we have that $x \wedge y \geq \sigma^{-i}(m^{x_i}) \wedge \sigma^{-i}(m^{y_i}) \geq \sigma^{-i}(m^{a_i})$, so that the second condition does not hold.

Suppose then that $x \vee y \notin X$, and let $a_i = x_i \vee y_i$. Then we have $x \vee y \geq \sigma^{-i}(m^{x_i})$ and $x \vee y \geq \sigma^{-i}(m^{y_i})$, so that $x \vee y \geq \sigma^{-i}(m^{a_i})$, and again the second condition fails.

The third condition is dual to the second, and is proven similarly. \qed
\end{proof}

The following proposition characterizes sofic cellwise lattice subshifts.

\begin{proposition}
\label{prop:SoficLattices}
A cellwise lattice subshift $X$ is sofic if and only if all the $m^a$ are eventually periodic in both directions (and the dual claim for $M^a$ holds as well).
\end{proposition}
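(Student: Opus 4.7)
The plan is to prove both directions of the equivalence using Lemma \ref{lemma:FirstCharacterization} as the bridge; the $M^a$ statement will then follow by the order-dual argument (swapping $\wedge$ with $\vee$ and $\leq$ with $\geq$).

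For the ``only if'' direction, assume $X$ is sofic and fix an essential presentation by a finite labeled graph $\mathcal{G} = (V, E, \lambda)$ in which every edge lies on a biinfinite path. For each $a \in S$, set $V^{(0)}_a := \{t(e) : \lambda(e) \geq a\} \subseteq V$ and recursively $V^{(i+1)}_a := \{t(e') : s(e') \in V^{(i)}_a\}$, writing $s(e), t(e)$ for the source and target of $e$. Unfolding the definition of $m^a$ yields $m^a_i = \bigwedge\{\lambda(e) : s(e) \in V^{(i-1)}_a\}$ for $i \geq 1$. Since $(V^{(i)}_a)_i$ is the orbit of a deterministic self-map on the finite set $2^V$, it is eventually periodic, and hence so is $m^a_i$ as $i \to \infty$; the $i \to -\infty$ direction is symmetric via predecessor sets.

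For the ``if'' direction, fix $N, P \in \N$ such that $m^a_{j+P} = m^a_j$ for every $a \in S$ and $|j| \geq N$. By Lemma \ref{lemma:FirstCharacterization}, $X$ is exactly the set of $x \in S^\Z$ satisfying $x_{i+j} \geq m^{x_i}_j$ for all $i, j \in \Z$. The inequalities with $|j| \leq N$ already form an SFT, while those with $|j| > N$ depend only on $x_i$, $x_{i+j}$, and $j \bmod P$. I would encode these long-range residue-indexed constraints by finite-state bookkeeping and thereby exhibit $X$ as a factor of an SFT $Y$. The alphabet of $Y$ records at each position $k$, besides $x_k$ and a window of nearby letters, two aggregate tuples $(A^+_\rho)_\rho, (A^-_\rho)_\rho \in (2^S)^P$: here $A^+_\rho$ (respectively $A^-_\rho$) collects the letters seen at past positions $i \leq k - N$ (respectively future positions $i \geq k + N$) with $(k - i - N) \equiv \rho \pmod P$. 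The local rules of $Y$ enforce consistent window overlaps, the natural deterministic rotation-plus-insertion update of $A^\pm_\rho$ as $k$ advances, and the inequalities $x_k \geq \bigvee_\rho \bigvee_{a \in A^+_\rho} m^a_{N+\rho}$ (plus its left-going counterpart) together with the short-range inequalities from $|j| \leq N$.

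Projecting $Y$ onto the first coordinate gives a map $\pi: Y \to S^\Z$, and one must verify $\pi(Y) = X$. The inclusion $X \subseteq \pi(Y)$ is immediate, since each $x \in X$ lifts via its honest aggregates. The main obstacle is the reverse inclusion: a biinfinite $y \in Y$ may carry ``oversize'' $A^\pm_\rho$ inherited from an infinite history, but enlarging $A^\pm_\rho$ only strengthens the inequality on $x_k$, so $\pi(y) \in X$ regardless. Soficity of $X$ then follows because $Y$ is an SFT and sofic shifts are closed under factor maps.
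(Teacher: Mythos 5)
Your proof is correct, and in the ``if'' direction it takes a genuinely different route from the paper's. For the ``only if'' direction the underlying idea is the same finite-state pigeonhole argument: the paper takes a right-resolving (resp.\ left-resolving) presentation and observes that the state sequence along a presentation of $m^a$ is deterministic, whereas you run a subset construction ($V^{(i)}_a \subset V$) on an essential presentation and read off $m^a_i$ as a meet over the reachable edges; your version is slightly more robust since it never needs to argue that a minimal-label successor edge exists, but the two arguments are morally identical. The real divergence is in the converse: the paper shows the language of $X$ is regular by building a two-way \emph{alternating} finite automaton that universally quantifies over positions and directions, and then invokes an external closure result (Lemma~3.11 of the cited reference) to conclude regularity, hence soficity. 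You instead construct an explicit SFT cover whose alphabet carries residue-indexed subset aggregates $A^{\pm}_\rho$ tracking which letters have occurred at each congruence class of distances, and you correctly identify and dispatch the one genuine obstacle in such a construction --- that a bi-infinite point of the cover may carry aggregates strictly larger than the honest ones --- by noting that the transition rule forces the aggregates to \emph{contain} the honest letters while spurious elements only tighten the lattice inequalities, so the factor lands in $X$ either way. Your construction is self-contained and exhibits $X$ directly as a factor of an SFT at the cost of a bulkier alphabet; the paper's is shorter but outsources the combinatorics to the theory of two-way alternating automata. Both are valid proofs of the proposition.
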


\begin{proof}
First, assume a cellwise defined lattice $X \subset S^\Z$ is sofic, and let $G$ be a right-resolving presentation for it \cite{LiMa95}. To prove eventual periodicity of $m_a$ for $a \in S$, we take a state $s_0 \in G$ such that $s$ may occur in the origin in a bi-infinite path with label $m_a$. Due to right-resolvingness of the presentation, for each $i > 0$, there is a unique state $s_i$ which can be reached in a presentation of $m_a$ with $s_0$ in the origin. Clearly, for all $i > 0$, $s_i$ must be the state reached from $s_{i-1}$ having the minimal label in the lattice $S$. This means that $s_i$ is a function of $s_{i-1}$, which in particular implies that $m_a$ is eventually periodic to the right. By considering a left-resolving presentation, we see that $m_a$ is also periodic to the left. There is a dual proof for eventual periodicity of the $M_a$.

Now, consider the case where all the $m_a$ are eventually periodic in both directions. It is enough to show that the language of $X$ regular \cite{LiMa95}. By using the fact that two-directional alternating finite automata have regular languages (see Lemma~3.11 in \cite{Sa11}), this is very easy: Our automaton universally quantifies over all positions $i \in [1, |w|]$ and directions $d \in \{l, r\}$. For $d = r$, the automaton checks that $w_{[i, |w|]} \geq m^{w_i}_{[0, |w| - i]}$ (cellwise), and the other direction is checked symmetrically. The final step is possible due to our assumption on $m^{w_i}$ and can be done with a one-way DFA. \qed
\end{proof}

\begin{lemma}
\label{lemma:BinaryLatticeSubshifts}
A nontrivial subshift $X \subset \mathbb{2}^\Z$ is a cellwise lattice subshift if and only if one of the following holds:
\begin{itemize}
\item $X = S^\Z$.
\item $X = \{x \in S^\Z \;|\; x = \sigma^n(x)\}$ for some $n$.
\item $X = \{x \in S^\Z \;|\; \forall i \in \Z, p \in P: x_i = 1 \Rightarrow x_{i + p} = 1\}$ for a finite set $P \subset \N$.
\item $X = \{x \in S^\Z \;|\; \forall i \in \Z, p \in P: x_i = 1 \Rightarrow x_{i - p} = 1\}$ for a finite set $P \subset \N$.
\end{itemize}
\end{lemma}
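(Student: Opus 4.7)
For the \emph{if} direction I will check closure under cellwise $\wedge$ and $\vee$ in each case. The full shift and $n$-periodic subshifts are immediate, since $\sigma^n$ commutes with the cellwise operations. For a right-forcing subshift, $(x \wedge y)_i = 1$ forces $x_i = y_i = 1$ and hence $x_{i+p} = y_{i+p} = 1$ for each $p \in P$, giving $(x \wedge y)_{i+p} = 1$; similarly $(x \vee y)_i = 1$ means $x_i = 1$ or $y_i = 1$, and the corresponding $1$ at position $i+p$ propagates to $(x \vee y)_{i+p} = 1$. Left-forcing is dual.

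For the converse, nontriviality forces $B_1(X) = \{0, 1\}$, so Lemma~\ref{lemma:FirstCharacterization} applies. It produces $z = m^1 \in X$ with $z_0 = 1$, and with $J = \{j \in \Z \mid z_j = 1\}$ the lemma becomes: $x \in X$ iff $x_i = 1$ implies $x_{i+j} = 1$ for all $i \in \Z$ and all $j \in J$. Applying this rule to $z$ itself at each position $i \in J$ yields $J + J \subseteq J$, so $J$ is a sub-monoid of $(\Z, +)$ containing $0$, and the structure of $X$ is determined entirely by $J$.

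The main step is the sub-monoid classification. If $J = \{0\}$, the rule is vacuous and $X$ is the full shift. If $J$ meets both $\N_{>0}$ and $\N_{<0}$, I set $a = \min(J \cap \N_{>0})$ and $b = \max(J \cap \N_{<0})$; then $a + b \in J$ must equal $0$, since $a + b > 0$ contradicts minimality of $a$ and $a + b < 0$ contradicts maximality of $b$. Hence $-a \in J$, and for any $j \in J$ iterated subtraction of $a$ stays in $J$ and must reach $0$ without skipping past it (otherwise the last non-negative term would lie in $J \cap (0, a)$), so $j \in a\Z$ and $J = a\Z$; the rule then forces both the $0$'s and $1$'s of $x$ to be unions of residue classes modulo $a$, giving case two with $n = a$. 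If instead $\{0\} \neq J \subseteq \N$, I invoke the classical fact that numerical sub-monoids are finitely generated; taking a finite generating set $P$, the $P$-constraint is equivalent to the $J$-constraint by iteration, yielding case three, and the case $J \subseteq -\N$ is symmetric. The hard part will be the two-sided sub-monoid argument; the rest is bookkeeping.
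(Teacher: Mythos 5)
Your proof is correct and follows essentially the same route as the paper: both reduce to the support $K=\{i : m^1_i=1\}$ of the minimal point over $1$, observe via Lemma~\ref{lemma:FirstCharacterization} that it is a submonoid of $\Z$, and split into the cases $K=\{0\}$, $K\subseteq\N$ (or $-\N$), and two-sided. The only cosmetic differences are that you verify the easy ``if'' direction explicitly and replace the paper's $\gcd$/Bezout step in the two-sided case with an equivalent min/max argument showing $\min(K\cap\N_{>0})+\max(K\cap\N_{<0})=0$.
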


\begin{proof}
Let $X$ be a cellwise lattice subshift, and let $K = \{i \in \Z \;|\; m^1_i = 1\}$. If $K = \{0\}$, then $X = S^\Z$, so suppose that $K \neq \{0\}$. From Lemma~\ref{lemma:FirstCharacterization} and the fact that $m^1 \in X$ it follows that $K$ is a subsemigroup of $\Z$, i.e.,
\[ \{ a_1 n_1 + \cdots + a_k n_k \;|\; a_i \in \N, n_i \in K \} \subset K \enspace . \]
A standard pigeonhole argument then shows that $K$ is generated by some finite set $P \subset \Z$. The third and fourth items correspond to the cases $P \subset \N$ and $-P \subset \N$, respectively.

If $P$ contains both positive and negative elements, let $n = \gcd P$. An application of Bezout's identity then shows that $K = \{pn \;|\; p \in \Z\}$, and the second case follows. \qed
\end{proof}

\begin{corollary}
Every cellwise lattice subshift of $\mathbb{2}^\Z$ is an SFT.
\end{corollary}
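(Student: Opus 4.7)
The plan is to apply Lemma~\ref{lemma:BinaryLatticeSubshifts} and check each of the listed cases individually, plus the degenerate possibilities ruled out by ``nontrivial''. The empty subshift is vacuously an SFT, and a cellwise lattice subshift with $B_1(X) \neq \mathbb{2}$ consists of a single constant point $\bar{0}$ or $\bar{1}$, which is an SFT defined by a single forbidden letter. So I may assume $B_1(X) = \mathbb{2}$ and that $X$ falls into one of the four listed families.

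Cases one and two are immediate: the full shift $\mathbb{2}^\Z$ is an SFT with no forbidden words, and the shift $\{x \mid \sigma^n(x) = x\}$ of $n$-periodic points is finite, hence an SFT (one may simply forbid every length-$(n+1)$ word $w$ for which $w_0 \neq w_n$).

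The key observation for the remaining two cases is that the defining condition uses only a \emph{finite} set $P$. In the third case, the condition ``$x_i = 1 \Rightarrow x_{i+p} = 1$ for all $i$ and all $p \in P$'' is literally the statement that no word of the form $1u0$ with $|1u0| = p+1$, $p \in P$, occurs in $x$. Since $P$ is finite, this amounts to a finite list of forbidden patterns of length at most $\max P + 1$, so $X$ is an SFT. The fourth case is symmetric, with forbidden patterns of the form $0u1$ of length $p+1$ for $p \in P$.

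There is no real obstacle here: the entire work is done by Lemma~\ref{lemma:BinaryLatticeSubshifts}, and the only thing to verify is that each of its normal forms corresponds to a subshift cut out by finitely many forbidden words, which is immediate by inspection. \qed
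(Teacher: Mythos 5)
Your proof is correct and follows exactly the route the paper intends: the corollary is stated without proof as an immediate consequence of Lemma~\ref{lemma:BinaryLatticeSubshifts}, and your case-by-case verification (including the degenerate cases with $B_1(X) \neq \mathbb{2}$) simply makes that explicit. The identification of the forbidden words $1u0$ of length $p+1$ for $p \in P$ in the third case, and its mirror image in the fourth, is the right observation and settles the matter.
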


\begin{corollary}
\label{corollary:BinaryComplements}
If $X$ is also closed under complementation in Lemma~\ref{lemma:BinaryLatticeSubshifts}, only the first two cases can occur.
\end{corollary}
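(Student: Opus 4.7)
The plan is to invoke Lemma~\ref{lemma:BinaryLatticeSubshifts} and then eliminate its last two cases. The lemma presents $X$ in one of four shapes; the first two are already the desired conclusion, so the task reduces to showing that a subshift of the third or fourth shape, if closed under complementation, collapses into one of the first two. The $0 \leftrightarrow 1$ symmetry that exchanges the third and fourth cases of the lemma means it suffices to handle the third.

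So suppose $X = \{x \in \mathbb{2}^\Z \;|\; \forall i \in \Z, p \in P : x_i = 1 \Rightarrow x_{i+p} = 1\}$ for a finite $P \subset \N$. If $P \subseteq \{0\}$ the constraint is vacuous and $X = \mathbb{2}^\Z$, falling into the first case. Otherwise $P$ contains some $p_0 \geq 1$, and I exhibit a point of $X$ whose complement lies outside $X$, contradicting closure under complementation. The witness is the configuration $x$ with $x_i = 1$ iff $i \geq 0$: whenever $x_i = 1$ we have $i \geq 0$, so $i + p \geq 0$ for every $p \in P \subset \N$, giving $x_{i+p} = 1$. Hence $x \in X$. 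Its complement $\bar x$ satisfies $\bar x_{-1} = 1$ while $\bar x_{-1+p_0} = 0$ (since $-1 + p_0 \geq 0$), which violates the propagation constraint at $i = -1$, $p = p_0$. Therefore $\bar x \notin X$, contradicting the hypothesis.

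There is essentially no obstacle here; the only point worth emphasizing is that the asymmetry between the two directions in the third and fourth cases of the lemma is precisely the feature broken by $0 \leftrightarrow 1$ exchange, and a single half-infinite configuration suffices to witness it.
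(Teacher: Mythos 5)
Your proof is correct and takes essentially the same approach as the paper: both arguments hinge on the half-infinite witness ${}^\infty 0 1^\infty$ and its complement. The only (cosmetic) difference is that you derive the contradiction directly from the propagation constraint defining case~3, while the paper phrases it via the resulting value of $m^1$.
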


\begin{proof}
Assume the contrary, say case $3$. Then $X$ contains the point $x = {}^\infty01^\infty$. But now $\bar{x} = {}^\infty10^\infty \in X$, and thus $m^1 = {}^\infty 010^\infty$. \qed
\end{proof}

We have now completely characterized all binary Boolean algebraic subshifts.

\begin{definition}
Let $S = 2^T$ be a finite Boolean algebra and $X \subset S^\Z$ a Boolean algebraic subshift. For each $t \in T$, define the block map $\pi_t : X \to \{0,1\}^\Z$ by
\[ \pi_t(x)_i =
\left\{\begin{array}{ll}
1, & \mbox{if } t \in x_i \\
0, & \mbox{if } t \notin x_i
\end{array}\right. \enspace . \]
If $R \subset T$, we also define $\pi_R : X \to (\{0,1\}^R)^\Z$ by $\pi_R(x) = (\pi_t(x))_{t \in R}$.
\end{definition}

Note that each $\pi_t$ and $\pi_R$ above is also a homomorphism of Boolean algebras, and that $\pi_T$ is an injective block map from $X$ to $(\{0,1\}^T)^\Z$.

\begin{definition}
Let $S = 2^T$ be a finite Boolean algebra. A subshift $X \subset S^\Z$ is called \emph{simple} if there exists a set $R \subset T$ such that the following conditions hold.
\begin{itemize}
\item For every $t \in T$, we have either $\pi_t(X) = \{ x \in \mathbb{2}^\Z \;|\; x = \sigma^n(x) \}$ for some $n \in \N$, or $\pi_t(X) = \mathbb{2}^\Z$.
\item For all $t \in T$, there exists $r \in R$ and $k \in \Z$ with $\pi_t = \sigma^k \circ \pi_r$.
\item The elements of $R$ are independent in the sense that if $r \in R$, $x \in \pi_r(X)$ and $y \in \pi_{R - \{r\}}(X)$, then there exists $z \in X$ with $\pi_r(z) = x$ and $\pi_{R - \{r\}}(z) = y$.
\end{itemize}
\end{definition}

Informally, a subshift is simple if it is essentially a product of full shifts and periodic subshifts. Note that in particular, such a subshift is conjugate to a product of a full shift and a finite shift. Clearly, all simple shifts are cellwise Boolean algebraic subshifts. The converse also holds:

\begin{theorem}
\label{theorem:BooleansAreNice}
Let $S = 2^T$ be a finite Boolean algebra and $X \subset S^\Z$ a subshift with $B_1(X) = S$. If $X \in \Sub(S^\Z)$, then $X$ is simple.
\end{theorem}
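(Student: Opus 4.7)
The plan is to verify the three conditions of simplicity in turn. For the first condition, each $\pi_t \colon X \to \mathbb{2}^\Z$ is a Boolean algebra homomorphism, so $\pi_t(X)$ is a cellwise Boolean algebraic subshift of $\mathbb{2}^\Z$ with $B_1(\pi_t(X)) = \mathbb{2}$, and Corollary~\ref{corollary:BinaryComplements} forces it to be either $\mathbb{2}^\Z$ or the set of $\sigma^n$-fixed points. For the second, declare $t \sim t'$ whenever $\pi_t = \sigma^k \circ \pi_{t'}$ on $X$ for some $k \in \Z$, and take $R$ to be any transversal of the resulting equivalence classes.

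The real work is the third condition, which I attack via a sub-direct-product dichotomy. The pairwise version states that for distinct $t, t' \in T$, the subshift $Y = \pi_{\{t, t'\}}(X) \subset (\mathbb{2}^2)^\Z$ equals the full product $\pi_t(X) \times \pi_{t'}(X)$ unless $t \sim t'$. The analysis centres on the closed shift-invariant ideal $A = \{a \in \pi_t(X) \;|\; (a, 0) \in Y\}$: a nonzero $a \in A$ dominates some atom of $\pi_t(X)$ by downward closure, all shifts of that atom lie in $A$, finite joins yield all finite-support configurations, and topological closure then fills up $\pi_t(X)$ (directly in the finite periodic case). Hence $A \in \{\{0\}, \pi_t(X)\}$, and symmetrically for $B = \{b \;|\; (0, b) \in Y\}$. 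Closure of $Y$ under complementation couples the two: if $A = \pi_t(X)$ then for any $(a, b) \in Y$ we have $(0, b) = (a, b) \wedge \overline{(a, 0)} \in Y$ and hence $B = \pi_{t'}(X)$, giving $Y = \pi_t(X) \times \pi_{t'}(X)$; if both are trivial, $Y$ is the graph of a bijective shift-equivariant Boolean homomorphism $\psi \colon \pi_t(X) \to \pi_{t'}(X)$, which the closed graph theorem and Curtis--Hedlund--Lyndon render a block map, and whose local rule is by Stone duality a coordinate projection, so $\psi = \sigma^k$ and $t \sim t'$.

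To lift pairwise to joint independence, I induct on $|R|$, applying the same dichotomy to $\pi_R(X) \subset \pi_r(X) \times \pi_{R - \{r\}}(X)$ with the inductive identity $\pi_{R - \{r\}}(X) = \prod_{r' \neq r} \pi_{r'}(X)$. Since ideals in a product of Boolean algebras split as products of ideals, the graph alternative forces a shift-equivariant Boolean isomorphism from $\pi_r(X)$ onto a sub-product of the remaining factors; if $\pi_r(X) = \mathbb{2}^\Z$, each component of this isomorphism is a shift and the joint image is a diagonal rather than the full product, while if $\pi_r(X)$ is finite, matching sizes and shift-orbit structure forces the sub-product to have a single factor, violating the transversal property of $R$. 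The main obstacle is the density step in the ideal dichotomy; once that is in hand the remainder of the argument is standard sub-direct product theory lifted into the subshift setting.
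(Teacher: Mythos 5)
Your proposal is correct, but it reaches the third (independence) condition of simplicity by a genuinely different route than the paper. The paper recycles its lattice machinery: it evaluates the extremal configurations $m^{\{r\}}$ from Lemma~\ref{lemma:FirstCharacterization}, observes that $t \in m^{\{r\}}_i$ forces $\pi_t \geq \sigma^{-i}\circ\pi_r$ and (by complementing) $\pi_t \leq \sigma^{-i}\circ\pi_r$, hence $t \sim r$; so the constraint system $x \geq \sigma^{-i}(m^{x_i})$ characterizing membership in $X$ decouples across $\sim$-classes and independence of the $R$-components follows directly from Lemma~\ref{lemma:FirstCharacterization}. You instead run a self-contained subdirect-product analysis: the key lemma that a closed, shift-invariant ideal of $\pi_t(X)$ is trivial or full (atoms, shifts, finite joins, topological closure), the complementation identity $(0,b) = (a,b)\wedge\overline{(a,0)}$ coupling the two ideals, and the rigidity of the residual graph case (Curtis--Hedlund--Lyndon plus the fact that a Boolean homomorphism $\mathbb{2}^{2r+1}\to\mathbb{2}$ is a coordinate projection, forcing $\psi = \sigma^k$ and hence $t \sim t'$), lifted to joint independence by induction on $|R|$ via the splitting of ideals in products. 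I verified the delicate points: downward closure of $A$ uses surjectivity of the first projection of $Y$, the atom argument works in both the full-shift and periodic cases, and in the inductive step the case $A = \{0\}$ always terminates in a contradiction with the transversal property of $R$, so the full product is the only outcome. Note that both proofs hinge on the same complementation trick at the decisive moment. What the paper's route buys is brevity and reuse of Lemma~\ref{lemma:FirstCharacterization}; what yours buys is independence from that lemma, the explicitly stronger conclusion $\pi_R(X) = \prod_{r\in R}\pi_r(X)$, and an argument in the style of classical subdirect-product theory that would be easier to transport to other varieties where an analogue of the ideal dichotomy holds. The one place you should expand in a written version is the claim that the local rule of the graph isomorphism is a Boolean homomorphism (this is Lemma~\ref{lemma:LinearityCharacterization} when the domain is a full shift, and needs the finite-algebra argument in the periodic case), but this is routine rather than a gap.
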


\begin{proof}
Since each $\pi_t(X)$ is a cellwise Boolean algebraic subshift over $\mathbb{2}$, the first condition follows from Corollary~\ref{corollary:BinaryComplements}.

Define an equivalence relation $\sim$ on $T$ by $t \sim s$ if $\pi_t = \sigma^k \circ \pi_s$ for some $k \in \Z$. Let $R$ be a set of representatives for the equivalence classes of $\sim$. Then for this $R$, the second condition holds.

Consider then the configuration $m^{\{r\}}$ for some $r \in R$. If $t \in m^{\{r\}}_i$ for any $t \in T$ and $i \in \Z$, then $\pi_t(x) \geq \sigma^{-i}(\pi_r(x))$, and by complementing, $\pi_t(x) \leq \sigma^{-i}(\pi_r(x))$, holds for all $x \in X$. The converse is also true, which means that $R \cap m^{\{r\}}_i \subset \{r\}$ for all $i$. The third condition then follows from Lemma~\ref{lemma:FirstCharacterization}, since $X$ is also a cellwise lattice subshift. \qed
\end{proof}

\begin{example}
Let $S$ be a lattice with at least $3$ elements, and let $a \neq 1$ be a least successor of $0$. Consider the rule `if $x_i = 1$, then $x_{i \pm 2^j} \geq a$ for all $j \in \N$'. By Lemma~\ref{lemma:FirstCharacterization}, the subshift generated by this rule forms a lattice. It is easy to see that this subshift is mixing, but is not even sofic.
\end{example}

\section{General Algebraic Subshifts}

In this section, $\F$ is a variety of algebras. We now consider $\F$-subshifts $X$ that are not necessarily cellwise. One way to define such general shifts is to use a cellwise $\F$-subshift $Y$ and a conjugacy $\phi : X \to Y$, and define
\[ f(x_1, \ldots, x_n) = \phi^{-1}(f(\phi(x_1), \ldots, \phi(x_n))) \]
for all $n$-ary algebra operations $f$ of $\F$. Clearly, $\phi$ now becomes an algebraic conjugacy. The main theorems in this section address the issue of deciding whether a given $\F$-subshift is algebraically conjugate to some cellwise $\F$-subshift.

\begin{definition}
An \emph{affine map} of an algebra $S$ is inductively defined as either $t(\xi) = \xi$ (the identity map), $t(\xi) = a$ for some $a \in S$ (the constant map) or $t(\xi) = f(a_1, \ldots, a_n)$, where $f$ is an $n$-ary operation, one of the $a_i$ is an affine map and the rest are constants $a_j \in S$. Here, $\xi \notin S$ is used as a variable. To each affine map $t$ we also associate a function $\mathrm{Eval}(t) : S \to S$ by replacing $\xi$ with the function argument and evaluating the resulting expression. We may denote $\mathrm{Eval}(t)(a)$ by $t(a)$ if $a \in S$. The set of affine maps of $S$ is denoted by $\mathrm{Af}(S)$.
\end{definition}

For example, in the ring $\Z$, the term $t(\xi) = 2 \cdot (3 + (\xi \cdot (-4)))$ is an affine map, and $\mathrm{Eval}(t)(i) = -8 i + 6$ for all $i \in \Z$.

The following is a dynamical characterization of $\F$-subshifts that are cellwise up to algebraic conjugacy. The proof uses a common recoding technique found, for example, in the Recoding Construction 4.3.1 of \cite{Ki98}.

\begin{theorem}
\label{theorem:UskomatonAsia}
Let $X \subset S^\Z$ be an $\F$-subshift. Then there exists a cellwise $\F$-subshift $Y$ and an algebraic conjugacy $\phi : X \to Y$ if and only if there is an $r$ such that for all $t \in \mathrm{Af}(X)$, the block map $\mathrm{Eval}(t)$ has radius at most $r$.
\end{theorem}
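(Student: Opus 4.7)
The plan is to treat the two directions separately: the forward direction is a short radius computation, while the reverse requires recoding $X$ via a congruence quotient. For the forward direction, suppose $\phi \colon X \to Y$ is an algebraic conjugacy with $Y \subset A^\Z$ cellwise, and let $r_1, r_2$ be the radii of $\phi$ and $\phi^{-1}$. Since the operations of $Y$ act cellwise, every $t_Y \in \mathrm{Af}(Y)$ has $\mathrm{Eval}(t_Y)(y)_i$ depending only on $y_i$, so $t_Y$ has radius $0$. Algebraicity of $\phi$ lets me match each $t_X \in \mathrm{Af}(X)$ with the affine map $t_Y$ obtained by replacing every constant $a \in X$ with $\phi(a) \in Y$, yielding $\mathrm{Eval}(t_X) = \phi^{-1} \circ \mathrm{Eval}(t_Y) \circ \phi$ of radius at most $r_1 + r_2$.

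For the reverse direction, I assume every affine map of $X$ has radius at most $r$ and introduce the relation $x \equiv y \iff t(x)_0 = t(y)_0$ for all $t \in \mathrm{Af}(X)$. Two claims about $\equiv$ need checking: it is a congruence of $X$, and it has finite index. For the congruence property, the preliminary observation is that the composition of two affine maps is again affine, by a quick structural induction on the outer map; this lets me change the arguments of any operation $f$ one at a time, because for any $t \in \mathrm{Af}(X)$ the map $\xi \mapsto t(f(\xi, x_2, \ldots, x_n))$ is affine, so $x_1 \equiv y_1$ forces $t(f(x_1, x_2, \ldots, x_n))_0 = t(f(y_1, x_2, \ldots, x_n))_0$. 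Finite index is immediate from the hypothesis: each $t(x)_0$ depends only on $x_{[-r, r]}$, so $[x]_\equiv$ is determined by this window and the number of classes is at most $|B_{2r+1}(X)|$.

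I then set $A := X/{\equiv}$, which lies in $\F$ as a homomorphic image of $X$, and define $\phi \colon X \to A^\Z$ by $\phi(x)_i := [\sigma^i(x)]_\equiv$. This map is a block map of radius $r$, shift-equivariant, injective (applying $\equiv$ to the identity affine map yields $x_i = y_i$ whenever $\phi(x) = \phi(y)$), and a homomorphism into $A^\Z$ equipped with its cellwise structure:
\[
\phi(f(x^1, \ldots, x^n))_i = [f(\sigma^i x^1, \ldots, \sigma^i x^n)]_\equiv = f(\phi(x^1)_i, \ldots, \phi(x^n)_i).
\]
Then $Y := \phi(X)$ is a subshift and a subalgebra of $A^\Z$, hence a cellwise $\F$-subshift, and $\phi$ becomes an algebraic conjugacy because a continuous bijection between compact metric spaces has a continuous inverse and a bijective homomorphism in a variety is automatically an isomorphism.

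The main obstacle will be choosing the right recoded alphabet. The naive candidate $B_{2r+1}(X)$ fails, since a length-$(2r{+}1)$ window of $f(x^1, \ldots, x^n)$ generally depends on strictly larger windows of each input, so nothing can be computed cellwise on word alphabets. The quotient $X/{\equiv}$ repairs this because it records exactly the information about a configuration at the origin that any iterated application of the algebra operations with constants can ever extract, which is precisely the information a cellwise operation on the quotient alphabet is able to read.
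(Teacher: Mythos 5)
Your proposal is correct and follows essentially the same route as the paper: both hinge on the equivalence relation identifying configurations on which every affine map agrees at the origin, which has finite index precisely because of the uniform radius bound, and both use the sum of the radii of $\phi$ and $\phi^{-1}$ for the converse. The only difference is packaging — the paper defines the relation on words in $B_{2r+1}(X)$, transports the algebra structure to $Y$ and verifies cellwiseness by contradiction, whereas you define it on points, verify explicitly that it is a congruence, and read off the cellwise structure from the quotient algebra $X/{\equiv}$ — which is a slightly cleaner but equivalent organization.
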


\begin{proof}
Suppose first that such an $r$ exists. Then, we can also meaningfully apply an affine map $t \in \mathrm{Af}(X)$ to a word $w \in B_{2r+1}(X)$ rooted at the origin by extending it arbitrarily to a configuration $x \in X$, and taking the center cell of $t(x)$. We define the following equivalence relation on $B_{2r+1}(X)$:
\[ \forall v, w \in B_{2r+1}(X): v \sim w \iff \forall t \in \mathrm{Af}(X): t(v)_0 = t(w)_0 \enspace . \]
Note that, in particular, $v \sim w \implies v_0 = w_0$. We define an injective block map $\psi : X \to (B_{2r+1}(X)/\sim)^\Z$ by $\psi(x)_i = x_{[i-r,i+r]}/\sim$, and denote $Y = \psi(X)$. We denote the obtained conjugacy by $\phi : X \to Y$.

In order to make the algebra operations commute with $\phi$, we define
\[ f(y_1, \ldots, y_n) = \phi(f(\phi^{-1}(y_1), \ldots, \phi^{-1}(y_n))) \]
for all $n$-ary algebra operations $f$, which is obviously well-defined. Now $\phi$ extends to a bijection between $\mathrm{Af}(X)$ and $\mathrm{Af}(Y)$ in a natural way. Let us show that every algebra operation $f$ is then defined cellwise in $Y$. Consider two points $y, y' \in Y$ with $y_0 = y'_0$. We need to show that $\phi(t)(y)_0 = \phi(t)(y')_0$ for all $\phi(t) \in \mathrm{Af}(Y)$. Assume the contrary, that $\phi(t)(y)_0 \not= \phi(t)(y')_0$ for some $\phi(t) \in \mathrm{Af}(Y)$. Let $x = \phi^{-1}(y)$ and $x' = \phi^{-1}(y')$. Then also $t(x)_{[-r,r]} = v \not\sim w = t(x')_{[-r,r]}$, and thus there exists $t' \in \mathrm{Af}(X)$ such that $t'(v) \not= t'(w)$. But by the assumption on affine maps, $t'' = t' \circ t$ has radius $r$. Now we have $t''(x)_0 \not= t''(x')_0$, which is a contradiction, since $x_{[-r,r]} \sim x'_{[-r,r]}$.

For the converse, note that if $X$ is algebraically conjugate to a cellwise $\F$-subshift $Y$ via the conjugacy $\phi$, then the radius of every translate is at most the sum of the radii of $\phi$ and $\phi^{-1}$. \qed
\end{proof}

We also obtain a sufficient algebraic condition for algebraic conjugacy with a cellwise $\F$-subshift. In the special case of the full shift, this becomes a characterization. We start with a definition.

\begin{definition}
We define the \emph{depth} of an affine map as the number of nested algebra operations in it. We say an algebra $S$ is \emph{$k$-shallow} if for every $t \in \mathrm{Af}(S)$ there exists $t' \in \mathrm{Af}(S)$ of depth at most $k$ such that $\mathrm{Eval}(t) = \mathrm{Eval}(t')$.
\end{definition}

\begin{theorem}
\label{theorem:ShallowImpliesPointwise}
Let $X \subset S^\Z$ be an $\F$-subshift. If $X$ is $k$-shallow, then it is cellwise up to algebraic conjugacy. If $X$ is algebraically conjugate to $R^\Z$ where $R \in \F$, then $X$ is $k$-shallow for some $k$.
\end{theorem}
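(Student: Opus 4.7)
The plan is to derive the forward implication directly from Theorem~\ref{theorem:UskomatonAsia} via a uniform radius bound, and to establish the converse by analyzing the coordinatewise structure of affine maps on a full shift.

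For the forward direction, let $r_0$ be the maximum radius of the (finitely many) algebra operations of $X$ viewed as block maps. A short induction on depth shows that any $t \in \mathrm{Af}(X)$ of depth at most $d$ has $\mathrm{Eval}(t)$ of radius at most $d r_0$: each nested operation contributes at most $r_0$, and plugging in constants contributes nothing. Consequently, if $X$ is $k$-shallow, every $\mathrm{Eval}(t)$ for $t \in \mathrm{Af}(X)$ has radius at most $k r_0$, and Theorem~\ref{theorem:UskomatonAsia} yields a cellwise $\F$-subshift $Y$ together with an algebraic conjugacy $\phi : X \to Y$.

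For the converse, suppose $\phi : X \to R^\Z$ is an algebraic conjugacy with $R \in \F$ (necessarily finite, since $R^\Z$ is a subshift). The map $\phi$ induces a depth-preserving bijection $\tilde\phi : \mathrm{Af}(X) \to \mathrm{Af}(R^\Z)$ by replacing each constant $a$ with $\phi(a)$, and a straightforward induction gives $\mathrm{Eval}(\tilde\phi(t)) = \phi \circ \mathrm{Eval}(t) \circ \phi^{-1}$, so $k$-shallowness transfers between $X$ and $R^\Z$. It thus suffices to show that $R^\Z$ is $k$-shallow for some $k$. Since the operations on $R^\Z$ act coordinatewise, $\mathrm{Eval}(t)(x)_i$ depends only on $x_i$ and the $i$th coordinates of the constants of $t$, so each $t$ determines a sequence $(g_i^t)_{i \in \Z}$ in the finite set $P_1(R)$ of unary polynomial operations of $R$, with two affine maps agreeing under $\mathrm{Eval}$ iff they induce the same sequence.

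The key combinatorial step is then the following. For any affine-map shape $\sigma$, let $E_\sigma \subseteq P_1(R)$ be the set of unary polynomials realizable by shape $\sigma$ with constants in $R$; by choosing constants independently at each coordinate, the sequences realized by shape-$\sigma$ $R^\Z$-expressions are exactly $(E_\sigma)^\Z$. Because $P_1(R)$ is finite, only finitely many distinct subsets $E_\sigma$ arise as $\sigma$ ranges over all shapes, so for each such subset I pick a shape of minimum depth realizing it and let $k$ be the maximum of these depths. Any $t \in \mathrm{Af}(R^\Z)$ of shape $\sigma_t$ can then be replaced by a depth-$\leq k$ expression $t'$ of the representative shape for $E_{\sigma_t}$, whose slot constants are chosen coordinatewise so that the tuple read off at position $i$ reproduces $g_i^t$; this gives $\mathrm{Eval}(t) = \mathrm{Eval}(t')$ and establishes that $R^\Z$ is $k$-shallow. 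The only nontrivial point is this coordinatewise lift, and it follows immediately from the independence of the slot constants at distinct positions afforded by the cellwise operations of $R^\Z$.
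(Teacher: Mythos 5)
Your proof is correct and follows essentially the same route as the paper: the forward direction is the depth-times-radius bound fed into Theorem~\ref{theorem:UskomatonAsia}, and the converse reduces to showing $R^\Z$ is $k$-shallow by reading an affine map coordinatewise as a sequence over the finite set of unary polynomials of $R$ and exploiting the independence of the constants across coordinates. The only (cosmetic) difference is that the paper packages the finiteness/pigeonhole step as the $k$-shallowness of the finite algebra $R^{|\Gamma|}$, whereas you argue directly with shapes and the sets $E_\sigma$; the combinatorial content is identical.
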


\begin{proof}
If $X$ is $k$-shallow, then clearly all affine maps have uniformly bounded radii, and Theorem~\ref{theorem:UskomatonAsia} gives the result.

For the other claim, it suffices to show that $R^\Z$ is $k$-shallow for some $k$. Since $R$ is finite, the set $\Gamma = \{\mathrm{Eval}(t) \;|\; t \in \mathrm{Af}(R)\}$ is finite. For an affine map $t \in \mbox{Af}(R^\Z)$ we denote by $t_i$ the affine map in $\mathrm{Af}(R)$ that $t$ computes in coordinate $i$. For each affine map $t \in \mbox{Af}(R^\Z)$ we define $\Delta_t = \{\mathrm{Eval}(t_i) \;|\; i \in \Z\}$. Let $n = |\Gamma|$ and note that since $R^n$ is finite, it is $k$-shallow for some $k$.

Let $t \in \mathrm{Af}(R^\Z)$ and let $j_1, \ldots, j_n$ be coordinates such that for all $h \in \Delta_t$ we have $\mathrm{Eval}(t_{j_i}) = h$ for some $i$. Construct an affine map $s \in \mathrm{Af}(R^n)$ by $s_i = t_{j_i}$. Since $R^n$ is $k$-shallow we find some affine map $s' \in \mathrm{Af}(R^n)$ of depth at most $k$ with $\mathrm{Eval}(s) = \mathrm{Eval}(s')$. We may now define an affine map $t' \in \mathrm{Af}(R^\Z)$ by $t'_i = s'_{j'_i}$ for all $i$, where $j'_i$ is such that $\mathrm{Eval}(t_i) = \mathrm{Eval}(s_{j'_i})$. Since $\mathrm{Eval}(s_j) = \mathrm{Eval}(s'_j)$ for all $j$, we have $\mathrm{Eval}(t) = \mathrm{Eval}(t')$. \qed
\end{proof}

\begin{corollary}
Up to algebraic conjugacy, every distributive lattice, Boolean algebra, ring, semigroup, monoid and group subshift is defined cellwise.
\end{corollary}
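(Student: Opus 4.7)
The plan is to invoke Theorem~\ref{theorem:ShallowImpliesPointwise}: since any $\F$-subshift $X$ satisfies the defining identities of $\F$, it suffices to show that for each of the listed varieties, every algebra in the variety is $k$-shallow for some $k$ depending only on the variety. I would establish this in each case by exhibiting a bounded-depth normal form for affine maps, obtained by applying the variety's identities to push operations outward and collapse constants.

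For semigroups (and hence monoids), an induction on depth shows that every affine map is $\mathrm{Eval}$-equivalent to one of $a \xi b$, $a \xi$, $\xi b$, $\xi$, or a constant, using associativity to flatten products; the resulting depth is at most $2$. For groups, we additionally push the inverse operation down to the variable using $(uv)^{-1} = v^{-1} u^{-1}$ and $(u^{-1})^{-1} = u$, landing in normal forms $a \xi b$ or $a \xi^{-1} b$ of depth at most $3$. For rings, distributivity together with the fact that $\xi$ occurs only once reduces every affine map to the form $a \xi b + c$, of depth at most $3$ (and at most $2$ in the commutative case).

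For distributive lattices, I would prove by induction on depth that every affine map reduces to one of $(\xi \wedge a) \vee b$, $\xi \wedge a$, $\xi \vee b$, $\xi$, or a constant $c$; the inductive step amounts to checking that applying $\wedge c$ or $\vee c$ to each of these forms again produces one of them, which is exactly the content of distributivity, associativity and absorption. The resulting depth is at most $2$. For Boolean algebras, De~Morgan's laws and involutivity of complement allow any complement symbol to be pushed all the way down to $\xi$, so the distributive lattice reduction applies with either $\xi$ or $\bar{\xi}$ in the variable position, giving depth at most $3$.

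The only real work is the bookkeeping: in each variety one must choose a list of normal forms rich enough to be closed under each algebra operation and then verify by case analysis that the reduction preserves $\mathrm{Eval}$-equivalence; no single case is difficult, but the distributive lattice case is the one where the absorption identity plays an essential role and deserves the most care. Once each variety has been shown to be uniformly $k$-shallow in this sense, Theorem~\ref{theorem:ShallowImpliesPointwise} immediately yields the corollary.
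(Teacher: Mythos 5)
Your proposal is correct and follows exactly the paper's route: the paper's proof is a one-line appeal to "suitable normal forms" showing distributive lattices, semigroups and monoids are $2$-shallow and Boolean algebras, groups and rings are $3$-shallow, then invoking Theorem~\ref{theorem:ShallowImpliesPointwise}. You have simply spelled out the normal forms and the depth counts, which match the paper's claimed bounds.
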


\begin{proof}
By finding suitable normal forms, one easily sees that distributive lattices, semigroups and monoids are $2$-shallow, while Boolean algebras, groups and rings are $3$-shallow. Note that affine maps have at most one unknown variable. \qed
\end{proof}

\begin{corollary}
Every Boolean algebraic subshift is algebraically conjugate to a product of a full shift and a finite shift.
\end{corollary}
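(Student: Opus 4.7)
The plan is to chain together the two main structural results developed so far. Let $X \subset S^\Z$ be a Boolean algebraic subshift. By the previous corollary, Boolean algebras are $3$-shallow, so by Theorem~\ref{theorem:ShallowImpliesPointwise} there is an algebraic conjugacy $\phi : X \to Y$, where $Y$ is a cellwise Boolean algebraic subshift over some finite Boolean algebra $R$. It then suffices to show that $Y$ is algebraically conjugate to a product of a full shift and a finite shift.

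Next I would reduce to the setting of Theorem~\ref{theorem:BooleansAreNice} by ensuring $B_1(Y) = R$. Concretely, I would replace $R$ by $R' = B_1(Y)$ and observe that $R'$ is already a Boolean subalgebra of $R$: it is closed under $\wedge$, $\vee$ and $\bar{\cdot}$ because $Y$ is closed under the cellwise operations, and it contains $0 = x \wedge \bar{x}$ and $1 = x \vee \bar{x}$ whenever $Y \ne \emptyset$. Then $Y \subset (R')^\Z$ is a cellwise Boolean algebraic subshift with full alphabet, so Theorem~\ref{theorem:BooleansAreNice} gives that $Y$ is simple.

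Finally, I would unpack the definition of simple via the homomorphism $\pi_{R_0} : Y \to \prod_{r \in R_0} \pi_r(Y)$, where $R_0 \subset T$ is the witnessing set of representatives (writing $R' = 2^T$). By the second bullet of simplicity every $\pi_t$ is a coordinate-shifted copy of some $\pi_r$ with $r \in R_0$, so $\pi_{R_0}$ is injective (the remaining coordinates are determined by shifts of those in $R_0$), hence a block conjugacy onto its image. By the third bullet this image is exactly the full direct product $\prod_{r \in R_0} \pi_r(Y)$, and by the first bullet each factor is either $\mathbb{2}^\Z$ or a finite periodic subshift. Grouping the full-shift factors into one full shift over a larger alphabet and the finitely many periodic factors into a single finite shift yields the desired product decomposition. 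The map $\pi_{R_0}$ is a Boolean algebra homomorphism on each coordinate, so the resulting conjugacy is algebraic, and composing with $\phi$ proves the corollary. The only non-routine step is the verification that $\pi_{R_0}$ realises $Y$ as the full product, but this is exactly the content of the independence clause in the definition of simple.
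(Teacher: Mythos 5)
Your proposal is correct and follows essentially the same route as the paper: the paper's proof is the one-line observation that the statement follows from the previous corollary (Boolean algebras are shallow, hence cellwise up to algebraic conjugacy) combined with Theorem~\ref{theorem:BooleansAreNice}, together with the remark already made after the definition of simplicity that a simple subshift is conjugate to a product of a full shift and a finite shift. Your additional care about restricting to the subalgebra $B_1(Y)$ and unpacking the independence clause just makes explicit the details the paper leaves to the reader.
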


\begin{proof}
This follows from the previous corollary and Theorem~\ref{theorem:BooleansAreNice}. \qed
\end{proof}

The previous corollary is an analogue of a result of Kitchens \cite{Ki87} for group subshifts.

\begin{example}
In the proof of Theorem~\ref{theorem:ShallowImpliesPointwise}, for the claim that algebraic conjugacy implies $k$-shallowness, the use of a full shift is crucial: the claim is false even for the mixing one-step SFT $X \subset \{0,1,2,\bot\}^\Z$ with the forbidden pairs $\{10, 11, 20, 21\}$ equipped with the cellwise binary operation
\[ \begin{array}{c|cccc}
\cdot & 0 & 1 & 2 & \bot \\ \hline
0 & 0 & 0 & 0 & \bot \\
1 & 1 & 2 & 1 & \bot \\
2 & 2 & 2 & 2 & \bot \\
\bot & \bot & \bot & \bot & \bot \\
\end{array} \]

It is easy to see that $X$ is indeed a mixing groupoid subshift, when the application of $\cdot$ is defined pointwise. For a coordinate $i \in \Z$, denote by $x^i$ the unique point of $X$ with $x^i_i = 1$ in which $\bot$ does not appear. Now, for all $k \in \N$, define the translation $t_k$ by 
\[ t_k(\xi) = (\cdots ((\xi \cdot x^1)\cdot x^2)\cdots \cdot x^k), \]
that is, we multiply from the right with the elements $x^i$ for all $1 \leq i \leq k$.

Consider the action of $t_k$ on the points $x^i$. If $1 \leq i \leq k$, then we see that $t_k(x^i)$ contains no $1$, and conversely, if $i < 1$ or $k < i$, then $t_k(x^i) = x^i$. It follows from the definition of $\cdot$ that $t_k$ cannot be realized with a translation whose constants contain the symbol $\bot$, and from this it is easily seen that $t_k$ is not equivalent to any translation of depth less than $k$.
\end{example}

Since lattices are not shallow in general, Theorem~\ref{theorem:UskomatonAsia} and Theorem~\ref{theorem:ShallowImpliesPointwise} do not tell us much about lattice subshifts. In fact, the following example gives a lattice subshift on a mixing SFT which is not cellwise up to algebraic conjugacy.

\begin{example}
Let $S = \{1_+,1_-,0_+,0_-\}$ have the lattice structure $1_+>0_+>0_-$ and $1_+>1_->0_-$, and define $X \subset S^\Z$ as the SFT with forbidden words $0_-1_+$ and $a_-b_-c_\delta$ for all $(a,b,c)\neq(0,0,0)$ and $\delta \in \{+,-\}$. Define $X$ to have a lattice structure where $\vee$ is defined cellwise, and $\wedge$ is defined as first applying the cellwise meet of $S^\Z$, and then rewriting instances of $0_-1_+$ to $0_-0_+$ and instances of $a_-b_-c_\delta$ to $0_-0_-0_\delta$. An easy calculation confirms that the operations are well-defined and $X$ indeed forms a lattice subshift.

We now show that $X$ has affine maps of arbitrary radius by giving two right asymptotic points $x,y \in X$ with $x_0 \neq y_0$, and then constructing affine maps $t_k \in \mathrm{Af}(X)$ for all $k \in \N$ such that $t_k(x)_k \neq t_k(y)_k$. Let $x = {}^\infty 1_+ . 0_+ {1_+}^\infty$ and $y = {}^\infty {1_+}^\infty$. Define $z_k = \sigma^{-k}({}^\infty 1_+ . 1_- {1_+}^\infty)$ and $z' = {}^\infty {0_+}^\infty$, and let
\[ t_k(\xi) = (( \cdots (((\xi \wedge z_0) \vee z') \wedge z_1) \vee z' \cdots ) \wedge z_k) \vee z' \enspace . \]
It is easy to see that $t_k(x) = {}^\infty 1_+ . (0_+)^{k+2}{1_+}^\infty$, and on the other hand, $t_k(y) = y$.
\end{example}

We are not aware of a modular lattice which is not shallow. This raises the natural question whether every modular lattice subshift defined cellwise up to algebraic conjugacy.

That all semigroups have shallow affine maps is due to associativity, which can be used to combine constants in terms. \emph{Quasigroups} are a generalization of groups, where the requirement of associativity is dropped. The variety of quasigroups is of type $\{(\cdot,2),(/\,,2),(\backslash\,,2)\}$ and has the identities
\[
\begin{array}{rl}
x \cdot (x \backslash\, y) \approx y, & x \backslash (x \cdot y) \approx y, \\
(x /\, y) \cdot y \approx x, & (x \cdot y) /\, y \approx x.
\end{array}
\]
We show that, in general, quasigroups do not have shallow translations, and that they are not necessarily cellwise up to algebraic conjugacy either, even if defined on a full shift.

\begin{example}
Let $Q=\{0,1\}$ be the group defined by addition modulo $2$. We can make $Q^\Z$ into a quasigroup by defining $x \cdot y = \sigma(x) + \sigma(y)$, $x /\, y = \sigma^{-1}(x) + y$ and $x \backslash\, y = x + \sigma^{-1}(y)$. Here, $+$ is defined cellwise. It is easy to see that the translations defined by $t_i(\xi) = (\cdots(\xi \cdot 0) \cdots \cdot 0)$, that is, multiplying by $0$ from the right $i$ times, have arbirarily large radii. Thus $Q^\Z$ can't be conjugate to a cellwise defined quasigroup.
\end{example}

\section{$\F$-linear Cellular Automata}

Let $\F$ be again a variety, and $S$ a finite member of $\F$. We only consider cellwise defined algebraic structures of $S^\Z$ in this section. We begin with the following useful lemma, characterizing all $\F$-linear cellular automata.

\begin{lemma}
\label{lemma:LinearityCharacterization}
A CA $G$ with neighborhood radius $r$ is $\F$-linear if and only if its local rule $g$ is an $\F$-morphism from $S^{2r + 1}$ to $S$.
\end{lemma}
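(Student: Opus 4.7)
The plan is a direct unpacking of both directions, using the fact that in this section the algebra structure on $S^\Z$ is the cellwise (i.e.\ direct product) one, and that $S^{2r+1}$ likewise carries the cellwise structure. In both directions the key identity is that for any $n$-ary $f \in \F$ and any $x^1,\ldots,x^n \in S^\Z$, the window $f(x^1,\ldots,x^n)_{[i-r,i+r]}$ coincides, as an element of $S^{2r+1}$, with $f(x^1_{[i-r,i+r]},\ldots,x^n_{[i-r,i+r]})$.

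For the \emph{if} direction, I assume $g : S^{2r+1} \to S$ is an $\F$-morphism and compute, for any $n$-ary operation $f$, any $x^1,\ldots,x^n \in S^\Z$ and any coordinate $i$,
\[
G(f(x^1,\ldots,x^n))_i = g\bigl(f(x^1,\ldots,x^n)_{[i-r,i+r]}\bigr) = g\bigl(f(x^1_{[i-r,i+r]},\ldots,x^n_{[i-r,i+r]})\bigr),
\]
and then apply the morphism property of $g$ to pull $f$ outside, obtaining $f(G(x^1)_i,\ldots,G(x^n)_i) = f(G(x^1),\ldots,G(x^n))_i$, which is what $\F$-linearity of $G$ demands.

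For the \emph{only if} direction, I assume $G$ is $\F$-linear and take arbitrary words $w^1,\ldots,w^n \in S^{2r+1}$. Extend each $w^k$ to some $x^k \in S^\Z$ with $x^k_{[-r,r]} = w^k$ (any extension works, e.g.\ by padding with a fixed symbol). Then $g(w^k) = G(x^k)_0$ by the definition of the local rule, and
\[
g\bigl(f(w^1,\ldots,w^n)\bigr) = g\bigl(f(x^1,\ldots,x^n)_{[-r,r]}\bigr) = G\bigl(f(x^1,\ldots,x^n)\bigr)_0,
\]
which by $\F$-linearity of $G$ equals $f(G(x^1),\ldots,G(x^n))_0 = f(g(w^1),\ldots,g(w^n))$.

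I do not expect any real obstacle: once one writes down the product-algebra identification of $S^{2r+1}$ inside $S^\Z$, both implications are essentially a one-line calculation, and the statement is really a sanity check that ``$\F$-linear'' on cellwise subshifts is a local property. The only thing worth flagging explicitly in the write-up is the extension step in the converse direction, since one must note that $g(w)$ does not depend on how $w$ is extended (which is immediate from the definition of a local rule).
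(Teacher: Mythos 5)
Your proposal is correct and follows essentially the same route as the paper's proof: the same cellwise-window identity drives both directions, and your extension of words to configurations (padding with a fixed symbol) is exactly the paper's ${}^\infty s w s^\infty$ construction. No issues.
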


\begin{proof}
Let $f$ be an $n$-ary algebra operation of $\F$.

Assume first that $G$ is $\F$-linear. Let $w_i \in S^{2r + 1}$ for all $i \in [1,n]$, and let $s \in S$ be arbitrary. Then 
\[ G(f({}^\infty sw_1s^\infty, \ldots, {}^\infty sw_ns^\infty)) = f(G({}^\infty sw_1s^\infty), \ldots, G({}^\infty sw_ns^\infty)), \]
so in particular
\[ g(f(w_1, \ldots, w_n)) = f(g(w_1), \ldots, g(w_n)) \]
for the local function $g$. Thus $g$ is an $\F$-morphism.

On the other hand, if the local function $g$ is an $\F$-morphism, consider arbitrary points $x^1, \ldots, x^n \in S^\Z$. We have
\begin{align*}
G(f(x^1, \ldots, x^n))_i =& g(f(x^1_{[i-r, i+r]}, \ldots, x^n_{[i-r, i+r]})) \\
=& f(g(x^1_{[i-r, i+r]}), \ldots, g(x^n_{[i-r, i+r]})) \\
=& f(G(x^1)_i, \ldots, G(x^n)_i)
\end{align*}
for all $i \in \Z$, which implies that $G$ is $\F$-linear.
\end{proof}

\begin{definition}
The variety $\F$ has the \emph{congruence-product property}, if for all finite families $(S_i)_{i \in [1,n]}$ of algebras in $\F$ we have that
\[ \Con(\prod_{i=1}^n S_i) = \prod_{i=1}^n \Con(S_i) \enspace . \]
\end{definition}

A proof of the following can be found, for example, in \cite{Gr71}.

\begin{lemma}
\label{lemma:CongruenceCharacterization}
The variety of lattices has the congruence-product property.
\end{lemma}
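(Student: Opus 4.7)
The forward containment $\prod_{i=1}^n \Con(S_i) \subseteq \Con(\prod_{i=1}^n S_i)$ is immediate, since an equivalence relation defined coordinatewise from congruences is preserved by any coordinatewise algebra operation. Using the natural isomorphism $\prod_{i=1}^n S_i \cong S_1 \times \prod_{i=2}^n S_i$, a short induction on $n$ reduces the content of the lemma to the binary case: for lattices $L_1, L_2$, every $\theta \in \Con(L_1 \times L_2)$ must be of the form $\theta_1 \times \theta_2$ with $\theta_i \in \Con(L_i)$.

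The distinguishing feature of lattice congruences is \emph{order-convexity}: if $x \mathrel{\theta} y$ with $x \leq z \leq y$, then $x \mathrel{\theta} z \mathrel{\theta} y$, because $x = x \wedge z \mathrel{\theta} z \wedge y = z$. The plan hinges on the following \emph{coordinate-surgery} claim: $(a_1, a_2) \mathrel{\theta} (b_1, b_2)$ implies $(a_1, a_2) \mathrel{\theta} (a_1, b_2) \mathrel{\theta} (b_1, b_2)$. I would prove this by first meeting and joining $(a_1, a_2) \mathrel{\theta} (b_1, b_2)$ with itself to obtain $(a_1 \wedge b_1, a_2 \wedge b_2) \mathrel{\theta} (a_1, a_2) \mathrel{\theta} (a_1 \vee b_1, a_2 \vee b_2)$. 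Meeting the right-hand relation with $(a_1 \wedge b_1, b_2)$ yields $(a_1 \wedge b_1, a_2 \wedge b_2) \mathrel{\theta} (a_1 \wedge b_1, b_2)$, and dually joining the left-hand relation with $(a_1 \vee b_1, b_2)$ yields $(a_1 \vee b_1, a_2 \vee b_2) \mathrel{\theta} (a_1 \vee b_1, b_2)$. Thus both ``corners'' $(a_1 \wedge b_1, b_2)$ and $(a_1 \vee b_1, b_2)$ are $\theta$-equivalent to $(a_1, a_2)$; since $(a_1, b_2)$ lies in the interval they span in the product order, convexity forces $(a_1, a_2) \mathrel{\theta} (a_1, b_2)$.

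Define $\theta_1$ on $L_1$ by $a \mathrel{\theta_1} b \iff (a, c) \mathrel{\theta} (b, c)$ for some $c \in L_2$, and dually $\theta_2$. To check that $\theta_1$ does not depend on the witness $c$, given $(a, c) \mathrel{\theta} (b, c)$ and an arbitrary $d \in L_2$, meeting with $(a \vee b, d)$ and joining with $(a \wedge b, d)$ yield $(a, c \wedge d) \mathrel{\theta} (b, c \wedge d)$ and $(a, c \vee d) \mathrel{\theta} (b, c \vee d)$; applying $\xi \vee (a, d)$ to the former and $\xi \wedge (a, d)$ to the latter gives $(a, d) \mathrel{\theta} (a \vee b, d)$ and $(a, d) \mathrel{\theta} (a \wedge b, d)$, after which interchanging the roles of $a$ and $b$ produces $(a, d) \mathrel{\theta} (b, d)$. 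A routine check shows each $\theta_i$ is a congruence. The inclusion $\theta_1 \times \theta_2 \subseteq \theta$ is immediate from the definitions and transitivity, while $\theta \subseteq \theta_1 \times \theta_2$ is exactly the surgery claim. The main obstacle is the surgery step itself: once one notices that $(a_1, b_2)$ can be sandwiched by two $\theta$-equivalent corners built from meets and joins of the original pair, convexity closes the argument.
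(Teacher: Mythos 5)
Your proof is correct. Note that the paper does not actually prove this lemma at all---it only cites Gr\"atzer---so there is no in-paper argument to compare against; what you have written is a complete, self-contained version of the standard argument that lattice congruences factor over direct products (the Fraser--Horn-type property for lattices). Every computational step checks out: the convexity of congruence classes (from $x \leq z \leq y$ and $x \mathrel{\theta} y$ one gets $x = x \wedge z \mathrel{\theta} y \wedge z = z$); the surgery step, where both corners $(a_1 \wedge b_1, b_2)$ and $(a_1 \vee b_1, b_2)$ are shown $\theta$-equivalent to $(a_1,a_2)$ and $(a_1,b_2)$ is sandwiched between them in the product order; the independence of $\theta_1$ from the witness $c$; and the two inclusions $\theta_1 \times \theta_2 \subseteq \theta$ (via transitivity through $(b_1,a_2)$) and $\theta \subseteq \theta_1 \times \theta_2$ (via the surgery claim). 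The reduction to the binary case by induction is also fine, and nothing in the argument uses finiteness of the lattices, which matters since the paper's definition of the congruence-product property quantifies over finite \emph{families} of possibly arbitrary algebras. One phrasing quibble: ``meeting and joining $(a_1,a_2) \mathrel{\theta} (b_1,b_2)$ with itself'' should read ``applying $\xi \mapsto \xi \wedge (a_1,a_2)$ and $\xi \mapsto \xi \vee (a_1,a_2)$ to the relation,'' which is what your subsequent computation actually does. It is also worth remarking that your argument visibly fails for the quasigroup-style examples later in the paper only because it leans on idempotence and absorption, which is exactly the point of restricting to varieties with the congruence-product property.
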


In the remainder of this section, we show that if $\F$ has the congruence-product property, then the $\F$-linear cellular automata have very simple limit sets and limit dynamics. In particular, by the above lemma, our results hold for lattice-linear automata.

\begin{lemma}
\label{lemma:EverythingCharacterized}
Let $\F$ be a variety with the congruence-product property and $S \in \F$ finite. Let $G$ be an $\F$-linear CA on $S^\Z$ with radius $r$ and local function $g$. Denote by $R \subset S$ the alphabet of the limit set of $G$ (which is clearly a subalgebra), and denote by $\pi'_k$ the canonical projections $R^{2r + 1} \to R$. Let $\prod_{i=1}^m R_i$ be the decomposition of $R$ into directly indecomposable algebras, and denote by $\pi_i$ the canonical projections $R \to R_i$. Then for each $i \in [1,m]$ there exist $j_i$, $k_i$ and a surjective $\F$-morphism $h_i : R_{j_i} \to R_i$ such that $\pi_i \circ g|_{R^{2r + 1}} = h_i \circ \pi_{j_i} \circ \pi'_{k_i}$.
\end{lemma}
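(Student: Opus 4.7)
My plan is to apply the congruence-product property to the kernel of $\varphi_i := \pi_i \circ g|_{R^{2r+1}}: R^{2r+1} \to R_i$ and exploit direct indecomposability of $R_i$ to collapse the resulting product of congruences to a single nontrivial factor. By Lemma~\ref{lemma:LinearityCharacterization}, $g: S^{2r+1} \to S$ is an $\F$-morphism, so its restriction to the subalgebra $R^{2r+1}$ is an $\F$-morphism into $S$. One checks that the image in fact lies in $R$ (the limit set $L = \bigcap_n G^n(S^\Z)$ is itself a subalgebra of $S^\Z$, as an intersection of images of the $\F$-morphism $G$) and that the induced map onto $R$ is surjective: a standard compactness argument gives $G(L) = L$, so every $a \in R$ arises as $g(y_{[-r,r]})$ for some $y \in L$, with $y_{[-r,r]} \in R^{2r+1}$. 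Composing with the surjective $\pi_i$ yields the surjective $\F$-morphism $\varphi_i$.

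Next I would invoke the congruence-product property. Using the given decomposition $R = \prod_{j=1}^m R_j$, we regard $R^{2r+1}$ as a finite product $\prod_{(k,j) \in [1,2r+1] \times [1,m]} R_j$ of $(2r+1)m$ directly indecomposable algebras. The congruence-product property then writes $\ker(\varphi_i) = \prod_{(k,j)} \theta_{k,j}$ for some $\theta_{k,j} \in \Con(R_j)$, and by the Homomorphism Theorem,
\[ R_i \;\cong\; R^{2r+1}/\ker(\varphi_i) \;\cong\; \prod_{(k,j)} R_j/\theta_{k,j}. \]

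Since $R_i$ is directly indecomposable, the product on the right has at most one nontrivial factor. Let $(k_i, j_i)$ be the index of that factor (if $R_i$ happens to be the one-element algebra, any choice of indices works and the claim is vacuous); then $\theta_{k,j}$ is the total congruence on $R_j$ for every $(k,j) \neq (k_i, j_i)$, so any two inputs to $\varphi_i$ agreeing in the $(k_i, j_i)$-coordinate lie in $\ker(\varphi_i)$. Hence $\varphi_i$ factors through the projection $\pi_{j_i} \circ \pi'_{k_i}$: defining $h_i: R_{j_i} \to R_i$ as the composition of the quotient map $R_{j_i} \to R_{j_i}/\theta_{k_i, j_i}$ with the isomorphism $R_{j_i}/\theta_{k_i,j_i} \cong R_i$ extracted from the display above, we obtain a surjective $\F$-morphism satisfying $\varphi_i = h_i \circ \pi_{j_i} \circ \pi'_{k_i}$, as required.

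The main obstacle is the bookkeeping around the iterated product $R^{2r+1} = \prod_{(k,j)} R_j$ and the corresponding double-indexed congruences, together with the mildly subtle verification that $g|_{R^{2r+1}}$ lands in $R$ and is surjective onto it; once these identifications are clean, the congruence-product property reduces the problem to a collection of quotients of individual indecomposable factors, after which the indecomposability of $R_i$ forces all but one of those quotients to be trivial and closes the argument.
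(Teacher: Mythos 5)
Your proposal is correct and follows essentially the same route as the paper's proof: apply the congruence-product property to $\ker(\pi_i \circ g|_{R^{2r+1}})$ viewed over the decomposition of $R^{2r+1}$ into indecomposable factors, then use the Homomorphism Theorem and direct indecomposability of $R_i$ to conclude that only one factor congruence is nontrivial. The only difference is presentational: the paper simply assumes $R = S$ so that $g$ is surjective, whereas you justify surjectivity of $g|_{R^{2r+1}}$ onto $R$ directly via $G(L)=L$, which is a slightly more careful treatment of the same step.
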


\begin{proof}
We may assume that $R = S$, so that $g$ is already surjective. Denote $g_i = \pi_i \circ g$, and decompose the domain $R^n = \prod_{j=1}^n \prod_{k=1}^m R_k$ into the directly indecomposable algebras $R_k$. Now $\ker g_i$ is a congruence, and since $g_i(R^n) = R_i$ is directly indecomposable, the homomorphism theorem states that $R^n / \ker g_i$ must be directly indecomposable. Since $\F$ has the congruence-product property, we have that $\ker g_i = \prod_{j=0}^{nm} \sim_j\, \in \prod_{j=1}^n \prod_{k=1}^m \Con(R_k)$, and now only one of these $\sim_j$ can be nontrivial. Thus $g_i$ is of the desired form.
\end{proof}

Since the proof of Lemma~\ref{lemma:CongruenceCharacterization} can also be carried out for Boolean algebras, we have the following.

\begin{corollary}
If $S = 2^T$ is a Boolean algebra and $G$ a Boolean-linear CA on $S^\Z$, then for each $t \in T$, either $\pi_t(G(S^\Z))$ is trivial, or we have $i \in \Z$ and $t' \in T$ such that $\pi_t \circ G = \pi_{t'} \circ \sigma^i$.
\end{corollary}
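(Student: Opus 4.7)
The plan is to imitate the proof of Lemma~\ref{lemma:EverythingCharacterized} directly in the Boolean setting, using the remark preceding the corollary that the congruence-product property also holds for Boolean algebras. The key structural fact I would rely on is that every finite Boolean algebra $2^T$ decomposes as a product of copies of the directly indecomposable algebra $\mathbb{2}$, indexed by $t \in T$, with the canonical direct-product projections being exactly the maps $\pi_t$. So in the Boolean case the decomposition of Lemma~\ref{lemma:EverythingCharacterized} is completely canonical.

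Fix $t \in T$. By Lemma~\ref{lemma:LinearityCharacterization}, the local rule $g : S^{2r+1} \to S$ is a Boolean homomorphism, hence so is $\pi_t \circ g : S^{2r+1} \to \mathbb{2}$. I would then decompose the source as the product $\prod_{(k,t') \in [-r,r] \times T} \mathbb{2}$ and apply the congruence-product property, so that $\ker(\pi_t \circ g)$ is a product of congruences on the individual $\mathbb{2}$-factors. Since $\mathbb{2}$ admits only two congruences (the identity and the total relation), each factor is either preserved or collapsed.

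By the homomorphism theorem the quotient $S^{2r+1}/\ker(\pi_t \circ g)$ embeds into $\mathbb{2}$, so it has at most two elements. If the quotient is trivial then $\pi_t \circ g$ is constant, so $\pi_t(G(S^\Z))$ is a single configuration, giving the first alternative of the corollary. Otherwise exactly one factor survives, say the one at index $(k_0, t')$ with $k_0 \in [-r, r]$ and $t' \in T$, and the formula reads $\pi_t \circ g = \pi_{t'} \circ \pi'_{k_0}$, where $\pi'_{k_0}$ is projection onto coordinate $k_0$ of $S^{2r+1}$. Lifting this identity from local rules to block maps gives $\pi_t \circ G = \pi_{t'} \circ \sigma^{k_0}$, i.e.\ the second alternative with $i = k_0$.

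The main point, more a bookkeeping matter than a genuine obstacle, is keeping the two layers of the product decomposition distinct: $S^{2r+1}$ is simultaneously a product over spatial coordinates $k \in [-r, r]$ and (via $S = 2^T$) over $T$, and it is the combined product over $[-r, r] \times T$ to which the congruence-product property must be applied in order to read off a single surviving coordinate $(k_0, t')$ from the quotient.
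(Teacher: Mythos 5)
Your proposal is correct and is essentially the paper's own argument: the paper obtains this corollary by noting that Boolean algebras satisfy the congruence-product property and invoking Lemma~\ref{lemma:EverythingCharacterized}, whose proof is exactly the kernel-decomposition argument you carry out, specialized to the canonical decomposition of $2^T$ into copies of $\mathbb{2}$ (where the only automorphism is the identity, absorbing the map $h_i$). The only cosmetic difference is that you run the argument on $S^{2r+1}$ directly rather than on the limit alphabet $R^{2r+1}$, which is harmless since the lemma's proof itself reduces to that case.
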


\begin{theorem}
Let $\F$ be a variety with the congruence-product property and $S \in \F$ finite. The limit set $X$ of an $\F$-linear cellular automaton $G$ on $S^\Z$ is algebraically conjugate to a product of full shifts, and $G$ is stable. Furthermore, there exists $p \in \N$ such that $G^p|_X$ is a product of powers of shift maps.
\end{theorem}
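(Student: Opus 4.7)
The plan is to use Lemma~\ref{lemma:EverythingCharacterized} to obtain an explicit combinatorial ``skeleton'' of $G$ on the limit-set alphabet, and then verify stability, algebraic conjugacy to a product of full shifts, and the shift structure of a suitable power of $G$ in turn.

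Let $R \subseteq S$ be the alphabet of the limit set. Since $R^\Z$ is $G$-invariant and the limit set of $G$ on $S^\Z$ coincides with the limit set of $G|_{R^\Z}$ (from which stability of the original $G$ will follow), we may work inside $R^\Z$, where $g|_{R^{2r+1}}$ is surjective. Decompose $R = \prod_{i=1}^m R_i$ into directly indecomposable factors. Lemma~\ref{lemma:EverythingCharacterized} provides, for each $i$, an index $\tau(i) \in [1,m]$, a shift $s_i \in \Z$, and a surjective $\F$-morphism $h_i : R_{\tau(i)} \to R_i$ such that $\pi_i \circ G = h_i \circ \sigma^{s_i} \circ \pi_{\tau(i)}$ (with $h_i$ applied cellwise). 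Iterating yields $\pi_i \circ G^n = H^{(n)}_i \circ \sigma^{T^{(n)}_i} \circ \pi_{\tau^n(i)}$, where $H^{(n)}_i$ is the composition of the $h$'s along the $\tau$-orbit of $i$ and $T^{(n)}_i \in \Z$. Let $C \subseteq [1,m]$ be the union of the cycles of $\tau$. Since composing the surjective $h_j$'s around any $\tau$-cycle returns to $R_j$, cardinalities agree around the cycle, and each $h_j$ with $j \in C$ is in fact an isomorphism. Writing $p$ for the LCM of the cycle lengths of $\tau|_C$, we have $\tau^p|_C = \mathrm{id}$ and $H^{(p)}_j$ is an automorphism of $R_j$ for every $j \in C$.

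Choose $N$ a multiple of $p$ large enough that $\tau^N([1,m]) = C$, so $\tau^N(i) \in C$ and $H^{(N)}_{\tau^N(i)}$ is invertible for every $i$. Define the $\F$-morphism $\Psi : R^\Z \to \prod_{j \in C} R_j^\Z$ by $\Psi(y) = (\pi_j(y))_{j \in C}$. The claim is that $\Psi$ restricts to an algebraic bijection on $G^N(R^\Z)$. For $y = G^N(w) \in G^N(R^\Z)$, the identity $\pi_i(y) = H^{(N)}_i \sigma^{T^{(N)}_i} \pi_{\tau^N(i)}(w)$, combined with the same identity at index $\tau^N(i)$ where $\tau^{2N}(i) = \tau^N(i)$, lets us solve for $\pi_{\tau^N(i)}(w)$ using invertibility of $H^{(N)}_{\tau^N(i)}$, and hence determines $\pi_i(y)$ as an explicit $\F$-morphic function of $\pi_{\tau^N(i)}(y)$, yielding injectivity. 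Conversely, given $(z_j)_{j \in C}$, any $w$ with $\pi_j(w) = \sigma^{-T^{(N)}_j}(H^{(N)}_j)^{-1}(z_j)$ for $j \in C$ and arbitrary coordinates elsewhere satisfies $\Psi(G^N(w)) = (z_j)_{j \in C}$. Transporting $G$ across this conjugacy yields $\tilde G = \Psi G \Psi^{-1}$ acting on $\prod_{j \in C} R_j^\Z$ by
\[ \tilde G((z_k)_{k \in C})_j = A_j \sigma^{t_j}(z_{\tau(j)}), \]
where $A_j = (H^{(N)}_j)^{-1} h_j H^{(N)}_{\tau(j)}$ is an isomorphism $R_{\tau(j)} \to R_j$ (a composition of three isomorphisms) and $t_j \in \Z$. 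Since $\tau|_C$ is a permutation and each $A_j$ is invertible, $\tilde G$ is a bijection; equivalently, $G$ restricts to a bijection on $G^N(R^\Z)$, so the chain $(G^n(R^\Z))_n$ stabilizes at $N$ and $X = G^N(R^\Z)$ is algebraically conjugate via $\Psi$ to the product of full shifts $\prod_{j \in C} R_j^\Z$, proving the first two claims.

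For the last claim, we iterate $\tilde G$. Since $\tau|_C$ has period dividing $p$, a direct calculation shows $\tilde G^p((z_k)_{k \in C})_j = B_j \sigma^{u_j}(z_j)$ for an automorphism $B_j$ of the finite algebra $R_j$ and an integer $u_j$. Taking $p'$ to be a common multiple of $p$ and the finite orders of the $B_j$'s kills the automorphism parts and leaves $\tilde G^{p'}$ as a product of pure shift powers on the $R_j^\Z$, as required. The main technical obstacle is the bookkeeping needed to write down $\tilde G$ and verify the various invertibilities; the crucial enabling observation is the pigeonhole fact that the $h_j$ for $j \in C$ are isomorphisms, which pins down the structure on the $\tau$-cycles and makes every computation reversible there.
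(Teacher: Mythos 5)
Your proposal is correct and follows essentially the same route as the paper's proof: the map $\tau$ on factor indices is exactly the paper's in-degree-one graph $H$, the invertibility of the $h_j$ around $\tau$-cycles plays the role of the paper's cycle automorphisms $f_i$, and the projection $\Psi$ onto the cycle components is the paper's algebraic conjugacy of $X$ with $S_{\mathcal{I}}^\Z$. Your write-up is somewhat more explicit about the bijectivity of $\Psi$ on $G^N(R^\Z)$ and about separating the cycle-length period from the automorphism orders, but the underlying argument is the same.
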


\begin{proof}
Let $\prod_{i=1}^m S_i$ be the decomposition of $S$ into directly indecomposable algebras. Define $H = (\{S_1, \ldots, S_m\},E)$ as the directed graph where $(S_i,S_j) \in E$ iff the domain of the surjective map $h_j$ given by Lemma~\ref{lemma:EverythingCharacterized} is $S_i$. Since each $S_i$ has exactly one incoming arrow, every strongly connected component of $H$ is a cycle or a single vertex. Let $S_i$ be in a cycle, say $S_i \to S_{i_1} \to \cdots \to S_{i_{p'-1}} \to S_i$. Since $S_i$ is finite, the map $f_i = h_i \circ h_{i_{p'-1}} \circ \cdots \circ h_{i_1}$ is an automorphism of $S_i$, and there exists $p_i \in \N$ such that $f_i^{p_i}$ is the identity map of $S_i$. This in turn implies that for all $x \in S^\Z$, we have
\[ \pi_i(G^{p_i}(x)) = \pi_i(\sigma^{k_i}(x)) \]
for some $k_i \in \Z$. That is, $G^{p_i}$ simply shifts the $S_i$-components of points by a constant amount. Let $\mathcal{I}$ be the set of indices $i$ such that $S_i$ occurs in a cycle, and let $p = \lcm_{i \in \mathcal{I}}(p_i)$. Clearly, $G$ has a natural reversible restriction on the full shift $S_\mathcal{I}^\Z$, where $S_\mathcal{I} = \prod_{i \in \mathcal{I}} S_i$.

Consider then $S_j$ for some $j \in \mathcal{J} = [1, m] - \mathcal{I}$. By following the incoming arrows we necessarily find an $i(j) \in \mathcal{I}$ and a path of the form
\[ S_{i(j)} \to S_{i_1} \to \cdots \to S_{i_{p'-1}} \to S_{i(j)} \to S_{j_1} \to \cdots \to S_{j_{q'-1}} \to S_j \enspace , \]
where $j_k \in \mathcal{J}$ for all $k$. Denote by $q(j)$ the length $q'$ of the path from $S_{i(j)}$ to $S_j$, and let $q = \max_{j \in \mathcal{J}} q(j)$.

Clearly, if $y = G^q(x)$ and $j \in \mathcal{J}$, then $\pi_j(y)$ is a function of $\pi_{i(j)}(x)$, which in turn is a function of some $\pi_i(y)$ with $i \in \mathcal{I}$. But this means that the $\mathcal{J}$-components of $y$ are uniquely determined by its $\mathcal{I}$-components. This and the fact that $G$ is reversible on $S_\mathcal{I}^\Z$ imply that $X = G^q(S^\Z)$, $X$ is algebraically conjugate to $S_\mathcal{I}^\Z$, and $G^p|_X$ is a product of powers of shift maps. \qed
\end{proof}

\section{Future Work}

In this paper we have only considered limit sets in the case when the cellular automaton starts from the full shift. It would be interesting to study limit sets of $\F$-linear automata starting from more complicated shifts. We do not know if our approach generalizes to, say, mixing $\F$-subshifts of finite type, assuming the congruence-product property. Future work might also involve studying the connections between other properties of the variety $\F$ and the $\F$-linear CA.

\section*{Acknowledgements}

We would like to thank an anonymous referee of STACS 2012 for suggesting Proposition~\ref{prop:SoficLattices}.

\bibliographystyle{plain}
\bibliography{//utuhome.utu.fi/iatorm/bib/bib}{}

\end{document}